\documentclass[12pt]{article}
\usepackage{amssymb,amsmath,amsthm,bm,mathtools,faktor,microtype}
\usepackage[total={148mm,208mm}]{geometry}  
\usepackage[bitstream-charter]{mathdesign}
\usepackage[english]{babel}
\usepackage[utf8x]{inputenc}
\usepackage[colorinlistoftodos]{todonotes}
\usepackage[colorlinks=true, allcolors=blue,hyperindex,breaklinks]{hyperref}
\date{}

\author{Johan Andersson\thanks{Email:johan.andersson@oru.se \, Address:Division of Mathematics, School of Science and Technology, {\"O}rebro University, {\"O}rebro, SE-701 82 Sweden.}}

\title{Discrete universality, continuous universality and hybrid universality are equivalent}

\theoremstyle{plain}
\newtheorem{thm}{Theorem}

\theoremstyle{definition}

\newtheorem{defn}{Definition}

\newtheorem{ack}{Acknowledgements}

\newcommand{\strip}{ S}
\DeclareOldFontCommand{\rm}{\normalfont\rmfamily}{\mathrm}
\DeclareOldFontCommand{\sf}{\normalfont\sffamily}{\mathsf}
\DeclareOldFontCommand{\tt}{\normalfont\ttfamily}{\mathtt}
\DeclareOldFontCommand{\bf}{\normalfont\bfseries}{\mathbf}
\DeclareOldFontCommand{\it}{\normalfont\itshape}{\mathit}
\DeclareOldFontCommand{\sl}{\normalfont\slshape}{\@nomath\sl}
\DeclareOldFontCommand{\sc}{\normalfont\scshape}{\@nomath\sc}
\newcommand{\nicefrac}[2]{\leave§ vmode\kern.1em\raise.5ex\hbox{\the\scriptfont0 #1}\kern-.1em/\kern-.15em\lower.25ex\hbox{\the\scriptfont0 #2}}
\def\halv{\mathchoice{{ \frac 1 2}}{1/2}{1/2}{1/2}}

\newcommand{\C}{{\mathbb C}}

\newcommand{\Q}{{\mathbb Q}} 
\newcommand{\R}{{\mathbb R}}

\newcommand{\abs}[1]{{\left| {#1} \right|}}

\renewcommand{\Re}{\operatorname{Re}} 

\renewcommand{\Im}{\operatorname{Im}}

\begin{document}
\maketitle
\begin{abstract}
Recently Sourmelidis proved that the discrete universality theorem is equivalent to the continuous universality theorem for zeta-functions. He treats both the zero-free universality theorem and the strong universality theorem. Unfortunately in the zero-free case his result  is conditional on a Riemann hypothesis, and in the strong universality case he only proves the implication in one direction. We prove this equivalence unconditionally, and also prove an equivalence with hybrid universality.  While the main application of our result is on Dirichlet series and zeta-functions, our proof is  general and does not use the fact that the functions in question can be represented by   Dirichlet series.
\end{abstract}

\section{Introduction}
In a 1975 paper \cite{Voronin}, Voronin 
proved that any zero-free continuous function $f(s)$ on
 some small disc $K=\{s:|s-3/4|\leq \delta<1\}$, where $0<\delta<1/4$, such that $f$  is analytic in its interior $K^\circ$  may be uniformly approximated on $K$ to    any desired degree of accuracy by the Riemann zeta-function $\zeta(s+it)$.
 Furthermore his method of proof also gives  that the approximation holds with a positive lower density of $t\in [0,T]$ as $T \to \infty$. This result is known as the Voronin universality theorem.
Starting from the late seventies Gonek \cite{Gonek}, Bagchi \cite{Bagchi} and others proved many variants of this theorem. In particular it is a consequence of the Mergelyan theorem that $K$ may be chosen as any compact set $K \subset \{s: 1/2<\Re(s)<1\}$ with connected complement.   Reich \cite{Reich} proved a discrete version of this result: The Riemann zeta function\footnote{Reich proved the result in the context of Dedekind zeta-functions.} $\zeta(s+i\alpha n_k)$ for a subsequence $n_k$ of the integers with positive lower density can uniformly approximate the function $f(s)$ on the set $K$. For certain applications \cite[section 15]{Matsumoto},\cite{KacKul},\cite{NakPan}  it is useful to have a hybrid universality theorem which gives a positive lower bound for $t$  such that the simultaneous approximation $|\zeta(s+it)-f(s)|<\varepsilon$ for $s \in K$ and    $|\lambda_j^{-it}-a_j|<\varepsilon$ for $j=1,\ldots,k$  when $|a_j|=1$ and $\log \lambda_j$  are linearly independent over $\Q$, is satisfied. This version of universality was first introduced by Gonek \cite{Gonek}. 
 Different variants of universality  have been proven for a multitude of zeta and $L$-functions. 
For a good introduction to the field, and the state of the art of the field in 2007, see Steuding's lecture notes \cite{Steuding}. For more recent surveys, see Matsumoto \cite{Matsumoto} from 2015 and Ka\v cinskait\'e-Matsumoto \cite{RomMat} from 2022. When  universality is proven for  some new Dirichlet series or zeta-function, it is often the case that it is first proved in the continuous setting, and that it is later proved in the discrete and/or hybrid setting.
For examples of this see \cite[Section 11, Section 15]{Matsumoto}.  The proof method for discrete and continuous universality
is  similar, but there are certain key steps where changes have to be made.   Whether either the discrete or the continuous universality theorem implies the other has however until recently not been clear.   Steuding remarks \cite[p.110]{Steuding}  about discrete universality that although their method of proof is similar  "these results do not contain the continuous analogues ... nor that they can be deduced from them". In a recent paper Sourmelidis \cite{Sourmelidis} used deep methods of linear dynamics to show that the concepts are in fact "two sides of the same coin". By assuming a Riemann hypothesis, he proved that the discrete and continuous universality theorem are equivalent. Unfortunately the Riemann hypothesis is not known for the Riemann zeta-function, any Dirichlet $L$-function or any element of the Selberg class of degree $d \geq 1$. The only zeta-functions where this result is applicable unconditionally are certain Selberg zeta-functions where the Riemann hypothesis is known and universality has been proved \cite{DruGarKac}, \cite{Mishou2}.
In the case of strong universality, where we remove the zero-free condition he managed to prove that the strong continuous universality theorem implies the discrete counterpart, whereas in the opposite direction he obtains the same result when $\liminf$ is replaced with $\limsup$. The aim of our paper is to prove the results of Sourmelidis unconditionally. Furthermore we manage to keep the $\liminf$ in the proof that strong discrete universality implies the strong continuous universality. We also prove that the discrete/continuous universality implies discrete/continuous hybrid universality.  While the converse is trivial, this has not been known previously and  the proofs have had to be  modified to take the hybrid universality part into account. This has made the proofs longer and a bit more technical.    Our proof is different than Sourmelidis proof and does not use any results from linear dynamics. Like Sourmelidis proof, it does not use the fact that the functions in questions are zeta-functions or can be represented by Dirichlet series.  Our main tools are the Mergelyan theorem, Kronecker's theorem  for the hybrid universality part, the fact that a continuous function on a compact set is uniformly continuous, and the triangle inequality. We also need to apply the universality theorems not on the original set $K$, but on a slightly enlarged set $K_0$ and  on a union  $K_1$ of suitably shifted copies of this set.

\section{Zero-free universality}

In this paper we  let $\operatorname{meas}(A)$ denote the Lebesgue measure of the set $A \subset \R$. We let
\begin{gather*}
  \strip =\{s \in \C :\sigma_1 <\Re(s)<\sigma_2 \}
\end{gather*}
denote a vertical strip in the complex plane. In many of the applications, such as for the Riemann zeta-function we have $\sigma_1=1/2$ and $\sigma_2=1$. However there are cases with other strips also, so we will let $\sigma_1$ and $\sigma_2$ be undefined. We will also allow  $\sigma_1=-\infty$ or $\sigma_2=\infty$ so that $\strip$ denotes a half-plane or a plane. We do not have any really natural examples 
for the plane/halfplane-case, but the proof works. For simplicity we call it a strip throughout the paper, rather than a "strip/halfplane/plane".
\begin{defn} \label{def1}
  We say that $Z$ satisfies the continuous universality theorem in the strip  $\strip$ if for any $\varepsilon>0$, any compact set $K \subset \strip$ with connected complement, and any zero-free function $f$ on $K$ that is analytic in its interior we have that
\begin{gather*}
  \liminf_{T  \to \infty} \frac 1 T \operatorname{meas} \left \{0 \leq t  \leq T:\max_{s \in K}  \abs{Z(s+it)-f(s)}<\varepsilon \right \}>0.
 \end{gather*}
\end{defn} 
When $Z$ satisfies the continuous universality theorem in the strip $\strip$ we say that $Z$ is universal in the strip $\strip$. While the canonical example  is $Z(s)=\zeta(s)$ the Riemann zeta-function which is universal in the strip $1/2<\Re(s)<1$, this result have been proved for many different $L$-functions such as $L$-functions of the Selberg class \cite{NagSte}. For more examples see \cite{Steuding} or \cite{Matsumoto}. 
\begin{defn} \label{def2}
  We say that $Z$ satisfies the $\alpha$-discrete universality theorem in the strip  $\strip$ if for any $\varepsilon>0$, any compact set $K \subset \strip$ with connected complement, and any zero-free function $f$ on $K$ that is analytic in its interior we have that
\begin{gather*} \liminf_{N \to \infty} \frac 1 N \#  \left \{1 \leq n \leq N:\max_{s \in K}  \abs{Z(s+in\alpha)-f(s)}<\varepsilon \right \}>0.
 \end{gather*}  
\end{defn}

\begin{defn} \label{def3}
  We say that $Z$ satisfies the continuous hybrid universality theorem in the strip  $\strip$ if for any $\varepsilon>0$, any $1<\lambda_1<\cdots< \lambda_k$  such that $ \log \lambda_1,\ldots,\log \lambda_k$ are linearly independent over $\Q$ and complex numbers $a_1,\ldots,a_k$ such that $|a_j|=1$, any compact set $K \subset \strip$ with connected complement, and any zero-free function $f$ on $K$,  that is analytic in its interior we have that  
\begin{multline*} \liminf_{T \to \infty} \frac 1 T \operatorname{meas}  \left \{0 \leq t \leq T:\max_{s \in K}  \abs{Z(s+it)-f(s)}<\varepsilon,
\max_{1\leq j \leq k}|\lambda_j^{-it}-a_j|<\varepsilon \right \}>0.
 \end{multline*}  
\end{defn} 

\begin{defn} \label{def4}
  We say that $Z$ satisfies the $\alpha$-discrete hybrid universality theorem in the strip  $\strip$ if for any $\varepsilon>0$, any  $1<\lambda_1<\cdots< \lambda_k$  such that $ \log \lambda_1,\ldots,\log \lambda_k$ are linearly independent over $\Q$ and $\pi^{-1} \alpha \log \lambda_j \not \in \Q$  and complex numbers $a_1,\ldots,a_k$ such that $|a_j|=1$, any compact set $K \subset \strip$ with connected complement, and any zero-free function $f$ on $K$ which is analytic in its interior we have that  
\begin{multline*} \liminf_{N \to \infty} \frac 1 N \#  \left \{1 \leq n \leq N:\max_{s \in K}  \abs{Z(s+in\alpha)-f(s)}<\varepsilon,  
\max_{1\leq j \leq k}|\lambda_j^{-in\alpha}-a_j|<\varepsilon \right \}>0.
 \end{multline*}  
\end{defn} 
\begin{thm} \label{thm1}
  The following propositions are equivalent
  \begin{enumerate} 
    \item $Z$ satisfies the continuous universality theorem in the strip  $\strip$.
    \item $Z$ satisfies  the $\alpha$-discrete  universality theorem in the strip  $\strip$ for some $\alpha>0$.
    \item $Z$ satisfies  the $\alpha$-discrete  universality theorem in the strip  $\strip$ for all $\alpha>0$.
 \item $Z$ satisfies the continuous hybrid universality theorem in the strip  $\strip$.
  \item $Z$ satisfies  the $\alpha$-discrete hybrid universality theorem in the strip  $\strip$ for some $\alpha>0$.
    \item $Z$ satisfies  the $\alpha$-discrete hybrid universality theorem in the strip  $\strip$ for all $\alpha>0$.
    \end{enumerate}
\end{thm}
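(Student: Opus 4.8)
The plan is to prove a handful of ``upward'' implications and to read off the rest from obvious ones. The trivial implications are that each hybrid statement implies its non-hybrid counterpart, since adjoining the constraints $\abs{\lambda_j^{-it}-a_j}<\varepsilon$ only shrinks the set being measured or counted (so $(4)\Rightarrow(1)$, $(5)\Rightarrow(2)$, $(6)\Rightarrow(3)$), and that ``for all $\alpha$'' implies ``for some $\alpha$'' (so $(3)\Rightarrow(2)$, $(6)\Rightarrow(5)$). One checks that, together with these arrows, the three implications $(2)\Rightarrow(1)$, $(1)\Rightarrow(4)$ and $(4)\Rightarrow(6)$ make all six statements mutually reachable. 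A preliminary reduction, used everywhere, is Mergelyan's theorem with the logarithm trick: as $f$ is zero-free and analytic on a $K$ with connected complement, write $f=e^{g}$, approximate $g$ by a polynomial, and obtain an \emph{entire}, zero-free function $p$ with $\max_{s\in K}\abs{f(s)-p(s)}<\varepsilon/3$. The value of passing to $p$ is that $p$ is defined and uniformly continuous on every vertical thickening and translate of $K$, whereas $f$ itself need not extend off $K$.

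For $(2)\Rightarrow(1)$ I would enlarge $K$ to a compact $K_0$ with connected complement containing $K+i[-\delta,\delta]$, with $\delta<\alpha/2$ chosen so small that $\max_{s\in K,\ \abs{y}\le\delta}\abs{p(s+iy)-p(s)}<\varepsilon/3$. Applying $\alpha$-discrete universality on $K_0$ with target $p$ produces a positive lower density of $n$ with $\max_{s\in K_0}\abs{Z(s+in\alpha)-p(s)}<\varepsilon/3$. For such $n$ and any $t$ with $\abs{t-n\alpha}\le\delta$, set $y=t-n\alpha$; then $s+iy\in K_0$ and the triangle inequality gives $\abs{Z(s+it)-f(s)}\le\abs{Z(s+iy+in\alpha)-p(s+iy)}+\abs{p(s+iy)-p(s)}+\abs{p(s)-f(s)}<\varepsilon$. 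Thus each good $n$ supplies a whole interval $[n\alpha-\delta,n\alpha+\delta]$ of good $t$; these are disjoint, so the measure of good $t$ in $[0,N\alpha]$ is at least $2\delta$ times the number of good $n\le N$, yielding a positive lower density and preserving the $\liminf$.

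The discretizing direction is the heart of the matter. The naive attempt fails because the lattice point nearest a good continuous shift $\tau$ is only guaranteed within $\alpha/2$, and $\abs{p(s+iy)-p(s)}$ need not be small for $\abs{y}$ of that fixed size — no single analytic target can be invariant under a vertical shift of fixed length. To get around this I would fix a large integer $M$ and a real $L$ exceeding the height of $K_0$ with $L/\alpha$ irrational, put $c_m=mL$, and form the disjoint union $K_1=\bigcup_{m=0}^{M-1}(K_0+ic_m)$, again compact with connected complement; on the $m$-th copy place the compensated target $g(s+ic_m)=p(s)$, so $g$ is analytic and zero-free on $K_1^\circ$. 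Then one single continuous-universality-good $\tau$ for $K_1$ makes every shift $\tau+c_m$ good for $p$ on $K_0$ simultaneously. By Weyl equidistribution of $\{mL/\alpha\}$, uniformly in the translate $\tau$, a positive fraction of the $M$ branches satisfy $(\tau+c_m)\bmod\alpha\in(-\delta,\delta)$, and for each such branch the nearest lattice point is good by the estimate above. A short bookkeeping step — for fixed $m$ the set of $\tau$ pointing to a given $n$ has measure at most $2\delta$ — converts the positive measure of good $\tau$ into a positive lower density of good $n$, uniformly in $T$; this gives $(1)\Rightarrow(3)$ and hence the equivalence of the non-hybrid statements.

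The hybrid statements are reached by the same union device, now letting the shifts $c_m$ also steer the torus coordinates through Kronecker's theorem. For $(1)\Rightarrow(4)$ I would choose $L$ so that, in addition, the orbit generated by $(\lambda_1^{-iL},\ldots,\lambda_k^{-iL})$ equidistributes on $(S^1)^k$; this is possible since $\log\lambda_1,\ldots,\log\lambda_k$ are linearly independent over $\Q$, so by Weyl's criterion all but countably many $L$ work. Then for good $\tau$ the points $(\lambda_j^{-i(\tau+c_m)})_j=(\lambda_j^{-i\tau}\lambda_j^{-ic_m})_j$ equidistribute as $m$ varies, so a positive fraction of branches also satisfy $\max_j\abs{\lambda_j^{-i(\tau+c_m)}-a_j}<\varepsilon$, and the measure bookkeeping yields continuous hybrid universality. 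For $(4)\Rightarrow(6)$ one discretizes as in $(1)\Rightarrow(3)$ while carrying the torus condition along: the branches must simultaneously land within $\delta$ of the lattice \emph{and} keep $\lambda_j^{-in\alpha}\approx a_j$, for which one invokes the joint equidistribution of $\big((\tau+c_m)\bmod\alpha,(\lambda_j^{-i(\tau+c_m)})_j\big)$ on $(\R/\alpha\Z)\times(S^1)^k$ under the stated number-theoretic conditions, the hypothesis $\pi^{-1}\alpha\log\lambda_j\notin\Q$ (with the $\Q$-independence of the $\log\lambda_j$) being exactly what makes Kronecker's/Weyl's theorem applicable here. Securing this joint equidistribution and the uniform counting that upgrades it to a $\liminf$ is the step I expect to be the main obstacle; the Mergelyan reduction, the triangle-inequality estimates, and the measure-to-density conversion are then routine.
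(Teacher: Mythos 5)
Your proposal is correct in substance and shares the paper's central device --- Mergelyan plus the logarithm to replace $f$ by an entire zero-free target, an enlarged set $K_0$ with a $\delta$ of uniform continuity, triangle-inequality smoothing for the discrete-to-continuous direction, and, for the hard direction, applying the continuous hypothesis to a union of vertically shifted copies of $K_0$ carrying the de-shifted target --- but it closes the implication graph differently and steers the branches by a different mechanism, so it is a genuinely distinct route. The paper proves $2)\to 1)$, $6)\to 4)$ and $1)\to 6)$, whereas you prove $2)\to 1)$, $1)\to 4)$ and $4)\to 6)$; and for the rounding-to-the-lattice problem the paper uses no equidistribution at all: its shifts $(mM_1+lL_1)\alpha$, with $M_1=3N+M^{-1}$ and $L_1=4MN\in\Z$, are rigged so that the residues $mM_1\alpha\equiv m\alpha/M \pmod{\alpha}$ form an exact grid of spacing $\alpha/M<\delta$, guaranteeing deterministically that some branch lies within $\delta$ of a lattice point, while the integer $L_1$ keeps $n_0+lL_1$ on the lattice and Kronecker's theorem (condition \eqref{iii}) steers the torus coordinates via the second index. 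Your one-parameter family $c_m=mL$ with $L/\alpha$ irrational instead requires Weyl equidistribution with discrepancy uniform in the translate; this works and buys a simpler construction, at the price of a quantitative input where the paper needs only pigeonhole plus Kronecker. Two remarks. First, once you have joint equidistribution of $\bigl((\tau+c_m)\bmod\alpha,(\lambda_j^{-i(\tau+c_m)})_j\bigr)$ uniformly over target boxes, statement 4) is redundant as input: the box for $(\lambda_j^{-ic_m})_j$ merely depends on $\tau$, which uniform discrepancy absorbs, so your construction in fact yields $1)\to 6)$ directly --- precisely the paper's route. Second, that joint equidistribution holds (for all but countably many $L$) exactly when $2\pi\alpha^{-1},\log\lambda_1,\ldots,\log\lambda_k$ are linearly independent over $\Q$, which is more than the literal hypotheses of Definition \ref{def4}; but this is not a deficiency of your proposal relative to the paper, since the paper's own appeal to Kronecker's theorem for \eqref{iii} rests on the same independence of $1$ and the numbers $L_1\alpha\log\lambda_j/(2\pi)$, so the step you flag as your main obstacle is exactly the step the paper settles by Kronecker, under identical requirements.
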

\begin{proof}
  The implications $6) \to 5), \, \, 5) \to 2), \, \, 6) \to 3), \, \, 3) \to 2)$ and  $4) \to 1)$  are immediate from the definitions. It remains to prove that $2) \to 1), \, \,  6) \to 4)$ and $1) \to 6)$. By Mergelyan's theorem, and since $f$ is zero-free on $K$, the function $\log f$ may be approximated by a polynomial $p$ on $K$ such that
\begin{gather} \label{eq1}
 \max_{s \in K} \abs{g(s)-f(s)} <\frac \varepsilon 4,  \qquad \text{where}  \qquad g(s)=e^{p(s)}.
\end{gather}
Now let $K_0$ be a compact subset with connected complement\footnote{such as a large filled in rectangle.}  such that $K \subset K_0 \subset \strip$ and   $d(K,K_0^\complement)=\delta_0>0$.
Since $g(s)=e^{p(s)}$, where $p$ is a polynomial, we have that $g$ is uniformly continuous on $K_0$. It follows that for some $0<\delta<\delta_0$ that
\begin{gather} \label{eq2}
  \max_{|\tau| \leq \delta} \max_{s \in K}  \abs{ g(s+i \tau)-g(s)} < \frac {\varepsilon} 4.
\end{gather}
We now divide the rest of the proof into the three main cases.
\vskip 3pt

\noindent {\bf $2) \to 1)$. Discrete universality implies continuous universality:}
We apply the discrete  universality theorem with some $\alpha>0$  on the zero-free function $g$ and the compact set $K_0$. We have that 
\begin{gather}
   \xi_1:=  \liminf_{N \to \infty} \frac 1 N \# S_N>0, \label{SNineqa} \\ \intertext{where} S_N :=       \left \{1 \leq n \leq N:\max_{s \in K_0} \abs{Z(s+in\alpha)-g(s)}<\frac \varepsilon 2 \right \}.  \label{SN2ab}
\end{gather} 
 Now assume that $t$ is in a $\delta$-neighborhood of $\alpha S_N$ so that for some $n \in S_N$ we have that $|t-n \alpha| \leq \delta$. Thus for $\tau=t- n \alpha$  we have that 
\begin{multline}  \label{eq3ab}
 \max_{s \in K} \abs{Z(s+it)-g(s+i \tau)} \qquad \text{ where } \qquad z=s+i\tau \\ =
 \max_{z \in K+i \tau}\abs{Z(z+in\alpha )-g(z)} \leq  \max_{z \in K_0}\abs{Z(z+in\alpha )-g(z)} <\frac{\varepsilon} 2,
\end{multline}
where the last inequality follows from $K+i\tau \subset K_0$ and \eqref{SN2ab} since $n \in S_N$. By the triangle inequality
\begin{gather*} 
  \abs{Z(s+it)-f(s)} \leq \abs{Z(s+it)-g(s+i\tau)}+\abs{g(s+i\tau)-g(s)}+\abs{g(s)-f(s)},
\end{gather*}
 by taking the maximum of $s \in K$ and applying the inequalities \eqref{eq3ab},\eqref{eq2} and \eqref{eq1}
we obtain the inequality
 \begin{gather} \label{effq3} \max_{s \in K} \abs{Z(s+it)-f(s)}<\varepsilon,\end{gather}
if $t$ is in a $\delta$-neighbourhood of $\alpha S_N$. Thus it follows from \eqref{SNineqa} with $T=\alpha N$ that
\begin{gather} \label{conc1} \liminf_{T  \to \infty} \frac 1 T \operatorname{meas} \left \{0 \leq t \leq T:\max_{s \in K}  \abs{Z(s+it)-f(s)}<\varepsilon  \right \} \geq \xi_2>0,
 \end{gather} 
 where $\xi_2= \xi_1\alpha^{-1} \min(\alpha,2\delta)$. \qed

\vskip 3pt
\noindent {\bf $6) \to 4)$. Discrete hybrid universality implies continuous hybrid universality:} 
 Let $(\lambda_1,\ldots,\lambda_k)$ and $(a_1,\ldots,a_k)$ satisfy the conditions in Definition \ref{def3}. The  functions $h_j(t)=\lambda_j^{-it}$ for $j=1,\ldots,k$ are uniformly continuous on the compact set $[0,4\pi (\log \lambda_j)^{-1}]$  and by periodicity $h_j$ are uniformly continuous on all of $\R$.  Thus it  follows from uniform continuity that for some $0<\delta_1<\delta$ then
\begin{gather} 
  \label{eq44}
  \max_{|\tau| \leq \delta_1} \max_{t \in \R}  \max_{j=1,\ldots,k} \abs{\lambda_j^{-it}-\lambda_j^{-it-i\tau}}<\frac {\varepsilon} 2. 
 \end{gather}
We now apply the discrete hybrid universality theorem with  some $\alpha>0$ such that $\pi^{-1}\alpha \log \lambda_j \not \in \Q$ for $j=1,\ldots,k$  on the zero-free function $g$ and the compact set $K_0$. We have that 
\begin{gather}
   \xi_3:=  \liminf_{N \to \infty} \frac 1 N \# U_N>0, \label{SNineq} \\ \intertext{where} U_N :=       \left \{1 \leq n \leq N:\max_{s \in K_0} \abs{Z(s+in\alpha)-g(s)}<\frac \varepsilon 2, \max_{j=1,\ldots,k} \abs{\lambda_j^{-in \alpha}-a_j}<\frac \varepsilon 2 \right \}.  \label{SN2}
\end{gather}
   Now assume that $t$ is in a $\delta_1$-neighborhood of $\alpha U_N$. It follows since $0<\delta_1 \leq \delta$ and \eqref{SN2ab}, \eqref{SN2} that $t$ is also in a $\delta$-neighborhood of $\alpha S_N$. Thus the inequality 
 $\eqref{effq3}$ holds for $t$ in a $\delta_1$-neighborhood of $\alpha U_n$. Also for  $t$ such that $|t-n\alpha| \leq \delta_1$ for $n \in U_N$ we have by the triangle inequality  and the inequalities \eqref{eq44} and \eqref{SN2} that
\begin{gather} \label{effq4}
  \abs{\lambda_j^{-it}-a_j}  \leq   \abs{\lambda_j^{-it}-\lambda_j^{-in\alpha}}+ \abs{\lambda_j^{-in\alpha}-a_j} < \frac {\varepsilon}2+ \frac \varepsilon 2=\varepsilon,  
\end{gather}
so for $t$ in a $\delta_1$-neighborhood of $\alpha U_N$ we have that \eqref{effq3} and \eqref{effq4} holds. Thus it follows from \eqref{SNineq}  with $T=\alpha N$ that
\begin{multline} \liminf_{T  \to \infty} \frac 1 T \operatorname{meas} \left \{0 \leq t \leq T:\max_{s \in K}  \abs{Z(s+it)-f(s)}<\varepsilon, \right. \\ \left.
 \max_{j=1,\ldots,k}\abs{\lambda_j^{-it}-a_j}< \varepsilon   \right \} \geq \xi_4>0, \label{conc2}
 \end{multline}  
 where $\xi_4=\xi_3\alpha^{-1}  \min(\alpha,2\delta_1)$. \qed
\vskip 3pt

\noindent {\bf $1) \to 6)$ Continuous universality implies discrete hybrid universality:}
Let $\alpha>0$ and $1<\lambda_1<\cdots<\lambda_k$, and $a_1,\ldots,a_k$ be given such that they satisfy the conditions of Definition \ref{def4}.  Let $M \geq 1$ be a large enough integer so that \begin{gather}\label{Mdef} 0<\alpha/M<\delta.\end{gather}
Let $N \geq 1$ be a sufficiently large integer so that 
\begin{gather*}K_0 \subset \{z \in \C: -N\alpha \leq \Im(z) \leq N \alpha \},
\end{gather*}
 and let
\begin{gather} \label{K1def}
 K_1:=\bigcup_{l=0}^L \bigcup_{m=1}^{M} K_{m,l} \qquad \text{where} \qquad K_{m,l}:=K_0+i(m M_1+l L_1)\alpha, \\ \intertext{where}
 \label{M1def} M_1=3N+M^{-1}, \qquad \text{where} \qquad L_1=4MN,  
\end{gather}
is an integer and where $L$ is defined to be the smallest integer such that
\begin{gather} \label{iii}
  \max_{|b_j|=1} \min_{0 \leq l \leq L} \max_{1 \leq j \leq k} \abs{\lambda_j^{-il L_1 \alpha}- b_j}<\varepsilon
\end{gather}
holds. That $L$ exists follows from Kronecker's theorem by the following argument: If there exist no finite $L$, then there exist some $k$-tuple $(b_1,\ldots,b_k)$ with $|b_1|=\cdots=|b_k|=1$ such that the approximation
\begin{gather*}
 \max_{1 \leq j \leq k} \abs{\lambda_j^{-il L_1 \alpha}- b_j}<\frac{\varepsilon} 2 
\end{gather*}
is not possible for any integer $l$. Since $\pi^{-1} L_1 \alpha \log \lambda_j$ are all irrational and linearly independent over $\Q$ this would give a contradiction to Kronecker's theorem. We notice that by the way the constants $M_1$ and $L_1$ have been chosen,  the compact sets $K_{m,l} \subset \strip$  are disjoint, and thus $K_1 \subset \strip$ is a compact set with connected complement.
 We  now define the function $h$ on $K_1$ by \begin{gather} \label{oj} h(s+ i(m M_1+l L_1)\alpha):=g(s), \qquad s \in K_0, \end{gather} for $m=1,\ldots,M$ and $l=0,\ldots,L$. It is clear that $h$ is continuous and zero-free on $K_1$ and analytic in its interior. We may thus apply the continuous universality theorem for $Z$ on the function $h$ and the compact set $K_1$. We get that 
\begin{gather}\xi_5:= \liminf_{T  \to \infty} \frac 1 T \operatorname{meas} V_T >0, \label{ir} \\ \intertext{where} \label{eq5w}
V_T= \left \{0 \leq t \leq T:\max_{s \in K_1}  \abs{Z(s+it)-h(s)}<\frac \varepsilon 2 \right \}.
 \end{gather}
We now let $t \in V_T$. By \eqref{Mdef} and \eqref{M1def} there exist some $1 \leq m_0 \leq M$ and some integer $ n_0$  such that 
 \begin{gather} \tau:=n_0 \alpha- t-m_0 M_1 \alpha, \qquad \text{and} \qquad  \abs{\tau} \leq \delta. \label{ioii}
\end{gather} We can rewrite this as 
\begin{gather} n_l \alpha=t + \tau+ (m_0M_1+lL_1) \alpha, \qquad n_l =n_0+lL_1. \label{rera2}
\end{gather}
By $|\tau| \leq \delta<\delta_0$ it follows that $K+i\tau \subset K_0$ and by \eqref{K1def} we have
\begin{gather}
  K+i\tau+i(m_0M_1+l L_1) \alpha \subset K_1. \label{yuy}
\end{gather}
It follows by  \eqref{oj}, \eqref{rera2} and using the substitution $z=s+i\tau+i(m_0M_1+lL_1) \alpha$, \eqref{yuy}, and \eqref{eq5w} since $t\in V_T$ that
\begin{gather} 
\begin{split}  
\max_{s \in K}  \abs{Z(s+in_l \alpha )- g(s+i\tau)} \hskip -120pt& \\ 
&=\max_{s \in K} \abs{Z(s+it+i\tau+i(m_0M_1+l L_1)\alpha)- h(s+i\tau+i(m_0M_1+l L_1)\alpha)} \\  &= \max_{z \in K+i\tau+i(m_0M_1+l L_1)\alpha} \abs{Z(z+it)- h(z)} \\ &\leq \max_{z \in K_1} \abs{Z(z+it) - h(z)} <\frac {\varepsilon} 2,
\end{split} \label{rera7}
\end{gather}
for each $l=0,\ldots,L$. By the triangle inequality we have for $s \in K$ that
\begin{gather*}   
  \abs{Z(s+in_l \alpha )- f(s)} \leq \abs{Z(s+i n_l \alpha)- g(s+i\tau )} + \abs{g(s+i\tau)-g(s)}+\abs{g(s)-f(s)},
\end{gather*} 
and by taking the maximum over $s \in K$ we get by \eqref{rera7},\eqref{ioii}, \eqref{eq2} and \eqref{eq1}  that
\begin{gather*}
 \max_{s \in K} \abs{Z(s+i n_l \alpha)- f(s)} <\varepsilon,
\end{gather*}
for each $l=0,\ldots, L$. From \eqref{iii}  it follows that for some integer $0 \leq l_0 \leq L$ we have the inequality
\begin{gather*}
  \max_{1 \leq j \leq k} \abs{\lambda_j^{-i(n_0+L_1 l_0)\alpha}-a_j} 
  =  \max_{1 \leq j \leq k} \abs{\lambda_j^{-i l
_0 L_1 \alpha}-a_je^{in_0 \alpha}} < \varepsilon.
\end{gather*}
Thus given $t \in V_T$ there exists
$n:=n_{l_0}=n_0+l_0 L_1$ which by  \eqref{ioii} and \eqref{rera2} satisfies  
\begin{gather} \label {ij1}
  t-\delta \leq n\alpha  \leq t+C, \\ \intertext{where $C=\delta+(M^2+LL_1) \alpha$ such that}
  \max_{s \in K} \abs{Z(s+in \alpha)-f(s)}<\varepsilon, \qquad \text{and} \qquad \max_{1 \leq j \leq k} \abs{\lambda_j^{- i n \alpha}-a_j}<\varepsilon. \label{ij2}
\end{gather}
Define\footnote{This is the same as the  definition of $U_N$ but with $\varepsilon/2$ replaced by $\varepsilon$.}
\begin{gather}
W_N :=       \left \{0 \leq n \leq N:\max_{s \in K_0} \abs{Z(s+in\alpha)-g(s)}< \varepsilon , \max_{j=1,\ldots,k} \abs{\lambda_j^{-in \alpha}-a_j}<\varepsilon  \right \},  \label{SN22}
\end{gather}
 recall that $V_T$ is defined by   \eqref{eq5w}, and let  $j=0,\dots,  N   $ denote integers. It follows by \eqref{ij1} and \eqref{ij2} that
each $t  \in V_{\alpha(j+1)} \setminus V_{\alpha j}$ gives an element $n \in W_{j+C+1} \setminus W_{j-\delta}$ and  thus since $V_{\alpha(j+1)} \setminus V_{\alpha j} \subset [\alpha j,\alpha(j+1)] $ and $\operatorname{meas} (V_{\alpha(j+1)} \setminus V_{\alpha j}) \leq \alpha$ we get the inequality
\begin{gather} \# ( W_{j+C+1} \setminus  W_{j-\delta}) \geq \alpha^{-1} \operatorname{meas}  (V_{\alpha(j+1)} \setminus V_{\alpha j}). \label{yrr} \end{gather}
Since each $n$ is contained in $W_{j+C+1} \setminus W_{j-\delta}  \subset  (j-\delta,j+C+1]$     for at most $C+1+\delta$ values of $j$  it follows by summing \eqref{yrr} over $j=0,\ldots, N   $ that
\begin{gather} \label{urr2}
 (C+1+\delta) \# W_{N+C+1} \geq \alpha^{-1} \operatorname{meas}  V_{\alpha N},
\end{gather}
and it follows by dividing both sides of \eqref{urr2} with $(C+1+\delta)N$ and using  \eqref{ir} that 
\begin{gather*} \liminf_{N \to \infty} \frac 1 N \#W_N \geq  \xi_6>0, \qquad \text{where} \qquad \xi_6=\xi_5/(C+1+\delta),
 \end{gather*}  
which by the definition \eqref{SN22} of $W_N$ concludes our proof.
\end{proof}

\section{Strong universality}
If we remove the zero-free conditions in  the theorems of universality we arrive at the concept of strong universality. The canonical example for this case is the Hurwitz zeta-function $Z(s)=\zeta(s,\beta)$, which is known to satisfy the strong universality theorem in the strip $1/2<\Re(s)<1$, when $\beta$ is transcendental or rational and  $0<\beta<1$, $\beta \neq 1/2$. 
We obtain a result corresponding to Theorem \ref{thm1}.
\begin{thm} \label{thm2} The following propositions are equivalent
  \begin{enumerate} 
    \item $Z$ satisfies the strong continuous universality theorem in the strip  $\strip$.
    \item $Z$ satisfies  the strong $\alpha$-discrete  universality theorem in the strip  $\strip$ for some $\alpha>0$.
    \item $Z$ satisfies  the strong $\alpha$-discrete  universality theorem in the strip  $\strip$ for all $\alpha>0$.
 \item $Z$ satisfies the strong continuous hybrid universality theorem in the strip  $\strip$.
  \item $Z$ satisfies  the strong $\alpha$-discrete hybrid universality theorem in the strip  $\strip$ for some $\alpha>0$.
    \item $Z$ satisfies  the strong $\alpha$-discrete hybrid universality theorem in the strip  $\strip$ for all $\alpha>0$.
    \end{enumerate}
\end{thm}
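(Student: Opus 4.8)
The plan is to reprove Theorem \ref{thm1} almost verbatim in the strong setting, the strong versions of Definitions \ref{def1}--\ref{def4} being obtained simply by deleting the words ``zero-free'' throughout. As before, the implications $6) \to 5)$, $5) \to 2)$, $6) \to 3)$, $3) \to 2)$ and $4) \to 1)$ are immediate from the definitions, so the real work again lies in $2) \to 1)$, $6) \to 4)$ and $1) \to 6)$. The single place where the proof of Theorem \ref{thm1} genuinely invoked the zero-free hypothesis is the opening reduction, where $\log f$ was approximated by a polynomial $p$ so that $g = e^{p}$ is zero-free, analytic, and uniformly continuous. In the strong case I would instead apply Mergelyan's theorem directly to $f$, producing a polynomial $g = p(s)$ with $\max_{s \in K} \abs{g(s) - f(s)} < \varepsilon/4$. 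This $g$ is entire, hence analytic on the interiors of $K_0$ and $K_1$, and uniformly continuous on every compact set, which is all that the remainder of the argument uses.

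With this replacement, every subsequent step carries over unchanged. Inequality \eqref{eq2} holds because $g$ is uniformly continuous; the implication $2) \to 1)$ proceeds as before, applying the strong discrete universality theorem to $g$ on $K_0$ and using the triangle inequality to pass from $g$ to $f$ on $K$. The implication $6) \to 4)$ is untouched, since the hybrid estimate \eqref{effq4} concerns only the functions $\lambda_j^{-it}$ and never the shape of $g$. For $1) \to 6)$ the auxiliary function $h$ of \eqref{oj}, built from shifted copies of $g$ on the disjoint translates $K_{m,l}$, remains continuous and analytic in its interior --- it is no longer zero-free, but the strong continuous universality theorem assumed in hypothesis $1)$ does not demand this --- so it may legitimately be fed into strong continuous universality on $K_1$. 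The Kronecker-theorem construction of $L$ and of $L_1$, and the final comparison \eqref{yrr}--\eqref{urr2} between $\operatorname{meas} V_T$ and $\# W_N$, involve neither $f$ nor $g$ and are reproduced word for word.

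The only point needing verification, and the sole (entirely routine) obstacle, is to confirm that the zero-free hypothesis enters the proof of Theorem \ref{thm1} nowhere beyond the initial reduction to $g = e^{p}$ and the implicit requirement that the non-strong universality theorems be applied to zero-free functions; both needs vanish once the target functions are no longer required to be zero-free. I expect no substantive difficulty here: the strong case is if anything easier, since it dispenses with the logarithm, and the resulting densities $\xi_2$, $\xi_4$, $\xi_6$ emerge from the identical bookkeeping.
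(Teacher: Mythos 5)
Your proposal is correct and follows essentially the same route as the paper: the paper's own proof of Theorem \ref{thm2} likewise consists of rerunning the proof of Theorem \ref{thm1} with $g(s)=p(s)$ (a direct Mergelyan polynomial approximation of $f$) in place of $g(s)=e^{p(s)}$, deleting ``zero-free'' throughout, and invoking the strong universality theorems where the zero-free ones were used. Your additional checks --- that $g$ remains uniformly continuous, that the hybrid estimates never use the shape of $g$, and that $h$ need not be zero-free for the strong hypothesis to apply --- are exactly the verifications implicit in the paper's ``almost word by word the same'' claim.
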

\begin{proof}
 The proof is almost word by word the same as the proof of Theorem \ref{thm1}. 
The main difference is that we take $g(s)=p(s)$, where 
$p$ is a polynomial instead of $g(s)=e^{p(s)}$ in \eqref{eq1}. 
The "zero-free"  in the proofs should be removed and the respective strong universality theorems should be used rather than the corresponding zero-free universality theorems.
\end{proof}

\section{Mixed and joint universality}

When $Z_1(s),\ldots,Z_n(s)$ each satisfies a universality theorem
we may ask if the same imaginary shift works simultaneously. We will allow that $Z_k$ for $k=1,\ldots,i$ satisfies the zero-free universality theorem and that $Z_k$ for $k=i+1,\ldots,i+j$ satisfies the strong universality theorem. This is usually what in the literature \cite{RomMat} is called mixed  universality or mixed joint universality. 
 We will call it $(i,j)$-mixed universality.  When  $j=0$ we talk about joint universality and when $i=0$ we talk about strong joint universality.  We give the following analogs of Definition \ref{def1}-Definition \ref{def4} for mixed joint universality theorems. We will use the following  convention in $\C^n$
\begin{gather*}
 \abs{z}=\max_{1 \leq j \leq n}\abs{z_j}, \qquad \text{when} \qquad z=(z_1,\ldots,z_n) 
\end{gather*}
so the absolute value will denote the sup-norm\footnote{Any norm on $\C^n$ could be used here as well.}.

\begin{defn} \label{def6}
  We say that $Z(s)=(Z_1(s),\ldots,Z_{i+j}(s))$ satisfies the $(i,j)$-mixed  continuous universality theorem in the strip  $\strip$ if for any $\varepsilon>0$, compact set $K \subset \strip$ with connected complement, and any  continuous  functions $f_1,\ldots,f_{i+j}$ on $K$ that are analytic in its interior, and such that $f_1,\ldots f_i$ are zero-free on $K$ we have with $f(s)=(f_1(s),\ldots,f_{i+j}(s))$  that
\begin{gather*} \liminf_{T  \to \infty} \frac 1 T \operatorname{meas} \left \{0 \leq t \leq T:\max_{s \in K}  \abs{Z(s+it)-f(s)}<\varepsilon \right \}>0.
 \end{gather*}
\end{defn} 
\noindent We  say that $Z_1(s),\ldots,Z_i(s)$ are jointly universal in the strip $\strip$ if $Z(s)=(Z_1(s),\ldots,Z_i(s))$ satisfies the $(i,0)$-mixed continuous universality theorem in the strip $\strip$. The canonical example  is joint universality of Dirichlet $L$-functions, where $L(s,\chi_1),\ldots,L(s,\chi_i)$ are jointly universal in the strip $\halv <\Re(s)<1$ when $\chi_1,\ldots,\chi_i$ are pairwise nonequivalent Dirichlet characters. This was proved by Voronin \cite{Voronin2}, Gonek \cite{Gonek} and Bagchi \cite{Bagchi,Bagchi2} independently. This result have been extended to the Selberg class for $d \geq 2$ \cite{Lee}, when the $L$-functions involved satisfy some natural conditions. Mixed universality was first considered for the pair $Z(s)=(\zeta(s),\zeta(s,\beta))$ for $0<\beta<1$ where $\beta$ is transcendental in \cite{Mishou1}  and where $\beta \neq 1/2$ is rational in \cite{SanSte}. For more about the history of mixed (joint) universality, see \cite{RomMat}.
\begin{defn} \label{def7}
   We say that $Z(s)=(Z_1(s),\ldots,Z_{i+j}(s))$ satisfies the $(i,j)$-mixed  $\alpha$-discrete universality theorem in the strip  $\strip$ if for any $\varepsilon>0$, compact set $K \subset \strip$ with connected complement,  and any  continuous  functions $f_1,\ldots,f_{i+j}$ on $K$ that are analytic in its interior, and such that $f_1,\ldots f_i$ are zero-free on $K$
   we have with $f(s)=(f_1(s),\ldots,f_{i+j}(s))$  that
\begin{gather*} \liminf_{N \to \infty} \frac 1 N \#  \left \{1 \leq n \leq N:\max_{s \in K}  \abs{Z(s+in\alpha)-f(s)}<\varepsilon \right \}>0.
 \end{gather*}  
\end{defn}

\begin{defn} \label{def8}
  We say that $Z(s)=(Z_1(s),\ldots,Z_{i+j}(s))$ satisfies the $(i,j)$-mixed continuous hybrid universality theorem in the strip  $\strip$ if for any $\varepsilon>0$, any $1<\lambda_1<\cdots< \lambda_k$  such that $ \log \lambda_1,\ldots,\log \lambda_k$ are linearly independent over $\Q$ and complex numbers $a_1,\ldots,a_k$ such that $|a_j|=1$, any compact set $K \subset \strip$ with connected complement,  and any  continuous  functions $f_1,\ldots,f_{i+j}$ on $K$ that are analytic in its interior, and such that $f_1,\ldots f_i$ are zero-free on $K$
 we have with $f(s)=(f_1(s),\ldots,f_{i+j}(s))$   that  
\begin{multline*} \liminf_{T \to \infty} \frac 1 T \operatorname{meas}  \left \{0 \leq t \leq T:\max_{s \in K}  \abs{Z(s+it)-f(s)}<\varepsilon, 
\max_{1\leq j \leq k}|\lambda_j^{-it}-a_j|<\varepsilon \right \}>0.
 \end{multline*}  
\end{defn} 

\begin{defn} \label{def9}
  We say that $Z(s)=(Z_1(s),\ldots,Z_{i+j}(s))$ satisfies the $(i,j)$-mixed    $\alpha$-discrete hybrid universality theorem in the strip  $\strip$ if for any $\varepsilon>0$, any  $1<\lambda_1<\cdots< \lambda_k$  such that $ \log \lambda_1,\ldots,\log \lambda_k$ are linearly independent over $\Q$ and $\pi^{-1} \alpha \log \lambda_j \not \in \Q$  and complex numbers $a_1,\ldots,a_k$ such that $|a_j|=1$, any compact set $K \subset \strip$ with connected complement, 
   and any  continuous  functions $f_1,\ldots,f_{i+j}$ on $K$ that are analytic in its interior, and such that $f_1,\ldots f_i$ are zero-free on $K$
  we have with $f(s)=(f_1(s),\ldots,f_{i+j}(s))$   that  
  \begin{multline*} \liminf_{N \to \infty} \frac 1 N \#  \left \{1 \leq n \leq N:\max_{s \in K}  \abs{Z(s+in\alpha)-f(s)}<\varepsilon,  
\max_{1\leq j \leq k}|\lambda_j^{-in\alpha}-a_j|<\varepsilon \right \}>0.
 \end{multline*}  
\end{defn} 

\begin{thm} \label{thm3} The following propositions are equivalent
  \begin{enumerate} 
    \item $Z(s)$ satisfies the $(i,j)$-mixed   continuous universality theorem in the strip  $\strip$.
    \item $Z(s)$ satisfies   the $(i,j)$-mixed   $\alpha$-discrete  universality theorem in the strip  $\strip$ for some $\alpha>0$.
    \item $Z(s)$ satisfies  the $(i,j)$-mixed   $\alpha$-discrete  universality theorem in the strip  $\strip$ for all $\alpha>0$.
 \item $Z(s)$ satisfies the $(i,j)$-mixed  continuous hybrid universality theorem in the strip  $\strip$.
  \item $Z(s)$ satisfies  the  $(i,j)$-mixed  $\alpha$-discrete hybrid universality theorem in the strip  $\strip$ for some $\alpha>0$.
    \item $Z(s)$ satisfies  the $(i,j)$-mixed  $\alpha$-discrete hybrid universality theorem in the strip  $\strip$ for all $\alpha>0$.
    \end{enumerate}
\end{thm}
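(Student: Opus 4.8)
The plan is to reduce Theorem \ref{thm3} to the argument already given for Theorem \ref{thm1}, which transfers essentially verbatim once the scalar approximant $g$ is replaced by a vector-valued one built coordinate by coordinate. As before, the implications $6) \to 5)$, $5) \to 2)$, $6) \to 3)$, $3) \to 2)$ and $4) \to 1)$ are immediate from the definitions, so I would only need to establish $2) \to 1)$, $6) \to 4)$ and $1) \to 6)$.

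First I would construct the approximant $g=(g_1,\ldots,g_{i+j})$ component-wise. Given $f=(f_1,\ldots,f_{i+j})$ with $f_1,\ldots,f_i$ zero-free on $K$, I apply Mergelyan's theorem separately to each coordinate: for $k=1,\ldots,i$ I approximate $\log f_k$ by a polynomial $p_k$ and set $g_k=e^{p_k}$, which is automatically zero-free; for $k=i+1,\ldots,i+j$ I approximate $f_k$ itself by a polynomial $p_k$ and set $g_k=p_k$. With the sup-norm convention $|z|=\max_j|z_j|$, this produces the exact analogue of \eqref{eq1}, namely $\max_{s\in K}|g(s)-f(s)|<\varepsilon/4$. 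Since each $g_k$ is a polynomial or the exponential of one, it is uniformly continuous on the enlarged set $K_0$, and taking the maximum over the finitely many coordinates yields the vector form of \eqref{eq2}.

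With these two inputs fixed, the three remaining implications go through word for word as in the proof of Theorem \ref{thm1}. The reason is that nothing in that argument actually used the scalar nature of $Z$ and $f$: the triangle inequality holds coordinate-wise for the sup-norm, so the estimates culminating in \eqref{effq3} and \eqref{effq4} survive unchanged; and the construction of $K_0$, $K_1$ in \eqref{K1def}--\eqref{M1def} together with the appeal to Kronecker's theorem that produces $L$ in \eqref{iii} depend only on $\alpha$, the $\lambda_j$ and the $a_j$, none of which are affected by passing to vectors. In $1) \to 6)$ the function $h$ on $K_1$ is defined exactly as in \eqref{oj}, now vector-valued, and the $(i,j)$-mixed continuous universality theorem applies to it because its first $i$ coordinates inherit zero-freeness from $g_1,\ldots,g_i$ while all coordinates are continuous and analytic in the interior.

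The only point requiring genuine care, and the mild obstacle here, is keeping the zero-free/merely-continuous split consistent at every stage: the first $i$ coordinates must stay zero-free in $g$, in each shifted copy assembled into $h$, and in the final conclusion, whereas the remaining $j$ coordinates carry no such restriction. Because exponentials of polynomials are zero-free and the vertical shifts defining $K_1$ and $h$ preserve this property, the bookkeeping closes up, and the densities $\xi_2,\xi_4,\xi_6$ can be taken identical to those obtained in Theorem \ref{thm1}.
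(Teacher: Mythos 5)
Your proposal is correct and follows essentially the same route as the paper: the author likewise reduces Theorem \ref{thm3} to the proof of Theorem \ref{thm1} by interpreting the absolute value as the sup-norm on $\C^{i+j}$, replacing $g(s)=e^{p(s)}$ with the component-wise approximant $(e^{p_1(s)},\ldots,e^{p_i(s)},p_{i+1}(s),\ldots,p_{i+j}(s))$ obtained from Mergelyan's theorem, and invoking the $(i,j)$-mixed versions of the universality theorems wherever they are applied. Your additional remarks on the zero-free bookkeeping and on why the Kronecker and $K_1$ constructions are unaffected simply make explicit what the paper leaves implicit.
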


\begin{proof}
  The proof is almost identical to the proof of Theorem 1, as long as we interpret the absolute values as a norm  in $\C^n$, where $n=i+j$. The changes we need to do are the following: In \eqref{eq1} we  use
\begin{gather} g(s)=(e^{p_1(s)},\ldots,e^{p_i(s)},p_{i+1}(s),\ldots,p_{i+j}(s)) \label{gdef2}
\end{gather}  instead of $g(s)=e^{p(s)}$, 
where by Mergeyan's theorem $p_1(s),\ldots,p_i(s)$ are polynomal approximations of $\log f_1(s),\ldots,\log f_i(s)$ and $p_{i+1}(s),\ldots, p_{i+j}(s)$ are polynomal approximations  of $f_{i+1}(s),\ldots,f_{i+j}(s)$. Then when we apply the unversality theorems, we need to apply the $(i,j)$-mixed versions of the theorems.
\end{proof}
 Hybrid universality have a lot of applications. For example it is a consequence of  
\cite[Theorem 2]{NakPan} and Theorem \ref{thm3}  that
\begin{thm}
Suppose that $Z_1(s),\ldots,Z_{n}(s)$ are jointly universal in the strip  $\strip$. 
 Then any linear combination 
$$
  A(s)=\sum_{k=1}^{n} D_k(s) Z_{k}(s),
$$
where $D_k(s)$ are absolutely convergent Dirichlet series in the strip $\strip$, at least two which are not identically zero, is strongly universal in the strip $\strip$.
\end{thm}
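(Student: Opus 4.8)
The plan is to obtain the statement by feeding the \emph{hybrid} form of joint universality, which Theorem~\ref{thm3} makes available for free, into \cite[Theorem 2]{NakPan}. First I would note that the hypothesis ``$Z_1,\ldots,Z_n$ are jointly universal in $\strip$'' is, by the convention fixed after Definition~\ref{def6}, exactly the assertion that $Z(s)=(Z_1(s),\ldots,Z_n(s))$ satisfies the $(n,0)$-mixed continuous universality theorem, i.e. item 1 of Theorem~\ref{thm3} with $i=n$ and $j=0$. The equivalence $1)\Leftrightarrow 4)$ of that theorem then promotes this, at no cost, to the $(n,0)$-mixed continuous hybrid universality theorem of Definition~\ref{def8}. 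This is the decisive point: the strong universality of a linear combination is known to follow from the \emph{hybrid} joint universality of its constituents, but hybrid joint universality was previously an extra hypothesis to be verified separately, whereas Theorem~\ref{thm3} shows that it is automatic once plain joint universality is known.

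Next I would invoke \cite[Theorem 2]{NakPan}, whose mechanism I would describe as follows. Fix a target $g$ that is continuous on a compact $K\subset\strip$ with connected complement and analytic in its interior, but \emph{not} assumed zero-free, and write $D_k(s)=\sum_m c_{k,m}\lambda_m^{-s}$. Under an imaginary shift the factors become $Z_k(s+it)$, which joint universality can steer towards prescribed zero-free targets, but the coefficient series become $D_k(s+it)=\sum_m c_{k,m}\lambda_m^{-it}\lambda_m^{-s}$, which oscillate in $t$. The hybrid conditions $\max_j\abs{\lambda_j^{-it}-a_j}<\varepsilon$ are precisely what controls this oscillation: after truncating the series (legitimate by absolute convergence on compacta of $\strip$) and expressing the finitely many surviving frequencies through a $\Q$-linearly independent generating set of their logarithms, one may force $D_k(s+it)$ close to a prescribed twisted series, and in the simplest choice $a_j=1$ simply close to $D_k(s)$ itself.

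With the coefficients thus frozen, it remains to choose zero-free analytic functions $f_1,\ldots,f_n$ for the jointly universal family such that $\sum_k D_k(s) f_k(s)$ approximates $g(s)$ on $K$; joint universality then delivers $Z_k(s+it)\approx f_k(s)$ and the triangle inequality closes the estimate exactly as in the proof of Theorem~\ref{thm1}. I expect this realization step --- producing an \emph{arbitrary}, possibly vanishing, $g$ as a combination $\sum_k D_k f_k$ of products whose factors $f_k$ are all zero-free --- to be the genuine obstacle, and it is where the assumption that at least two of the $D_k$ are not identically zero enters: with two nonzero coefficient series there is enough room (for instance taking $f_2=e^{\psi}$, solving $f_1=(g-D_2 f_2)/D_1$, and tuning the analytic function $\psi$ so that $f_1$ and $f_2$ both stay zero-free and analytic) to manufacture the zeros of $g$ out of a difference of zero-free functions, while the zeros of the $D_k$ on $K$ must be handled as well. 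Since that construction is carried out in \cite[Theorem 2]{NakPan}, my proof would consist of establishing the continuous hybrid joint universality via Theorem~\ref{thm3} and then quoting that theorem to conclude the strong universality of $A$.
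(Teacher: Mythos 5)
Your proposal is correct and is essentially identical to the paper's own proof: the paper also obtains this result by applying the equivalence $1)\Leftrightarrow 4)$ of Theorem~\ref{thm3} (with $i=n$, $j=0$) to upgrade joint universality to continuous hybrid joint universality, and then quoting \cite[Theorem 2]{NakPan}, which takes hybrid joint universality as its hypothesis. Your extra sketch of the internal mechanism of \cite[Theorem 2]{NakPan} is not required for the argument but is consistent with it.
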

The difference compared to \cite[Theorem 2]{NakPan} is that we have replaced the hybrid joint universality condition   with a joint universality condition.
Similarly in \cite[Theorem 1, Corollary 1, Theorem 3]{NakPan}, the assumption that $L_1,\ldots,L_n$ are hybridly jointly universal can be relaxed to jointly universal. As a consequence of the simplest case of \cite[Theorem 1]{NakPan} and Theorem \ref{thm3} we have that whenever $Z(s)$ is universal in the strip $\strip$ and $A(s)$ is an absolutely convergent zero-free Dirichlet series in $\strip$, then $A(s)Z(s)$ is also universal in the strip $\strip$. Simple cases that follows from the universality of the Riemann zeta-function are $\zeta(s)/\zeta(2s)$ and $\zeta(s)/\zeta(s+1)$ which are universal in the strip $1/2<\Re(s)<1$.

\section{Universality in short intervals}

Recently Antanas  Laurin\v cikas \cite{Laurincikas} proved that in the Voronin universality theorem for the Riemann zeta-function, we may choose $t$ in short intervals $[T,T+\omega(T)]$, when $T^{1/3} (\log T)^{26/15} \leq \omega(T) \leq T$. Shortly thereafter \cite{Laurincikas2} he proved the corresponding discrete universality theorem. We may extend our results that relates discrete, continuous and hybrid universality to the short interval case such that the result in \cite{Laurincikas2} follows from Theorem \ref{thm4} and the result in \cite{Laurincikas}. In the below definitions we  assume that $\omega:\R^+ \to \R^+$. In practice we are interested to find $\omega(T)$ that grows to infinity as slow as possible. We choose to state our definitions in this case in their $(i,j)$-mixed form as their joint,  strong and zero-free versions are consequences of this.

\begin{defn} \label{def10}
  We say that $Z(s)=(Z_1(s),\ldots,Z_{i+j}(s))$ satisfies the $(i,j)$-mixed  continuous universality theorem in the strip  $\strip$ and  short intervals $[T,T+\omega(T)]$ if for any $\varepsilon>0$, compact set $K \subset \strip$ with connected complement, and any  continuous  functions $f_1,\ldots,f_{i+j}$ on $K$ that are analytic in its interior, and such that $f_1,\ldots f_i$ are zero-free on $K$ we have with $f(s)=(f_1(s),\ldots,f_{i+j}(s))$  that
\begin{gather*} \liminf_{T  \to \infty} \frac 1 {\omega(T)} \operatorname{meas} \left \{T \leq t \leq T+\omega(T):\max_{s \in K}  \abs{Z(s+it)-f(s)}<\varepsilon \right \}>0.
 \end{gather*}
\end{defn} 
\begin{defn} \label{def11}
   We say that $Z(s)=(Z_1(s),\ldots,Z_{i+j}(s))$ satisfies the $(i,j)$-mixed  $\alpha$-discrete universality theorem in the strip  $\strip$ and  short intervals $[T,T+\omega(T)]$  if for any $\varepsilon>0$, compact set $K \subset \strip$ with connected complement,  and any  continuous  functions $f_1,\ldots,f_{i+j}$ on $K$ that are analytic in its interior, and such that $f_1,\ldots f_i$ are zero-free on $K$
   we have with $f(s)=(f_1(s),\ldots,f_{i+j}(s))$  that
\begin{gather*} \liminf_{N \to \infty}  \frac 1 {\omega(N)} \#  \left \{N \leq n \leq N+\omega(N) :\max_{s \in K}  \abs{Z(s+in\alpha)-f(s)}<\varepsilon \right \}>0.
 \end{gather*}  
\end{defn}

\begin{defn} \label{def12}
  We say that $Z(s)=(Z_1(s),\ldots,Z_{i+j}(s))$ satisfies the $(i,j)$-mixed continuous hybrid universality theorem in the strip  $\strip$  and  short intervals $[T,T+\omega(T)]$  if for any $\varepsilon>0$, any $1<\lambda_1<\cdots< \lambda_k$  such that $ \log \lambda_1,\ldots,\log \lambda_k$ are linearly independent over $\Q$ and complex numbers $a_1,\ldots,a_k$ such that $|a_j|=1$, any compact set $K \subset \strip$ with connected complement,  and any  continuous  functions $f_1,\ldots,f_{i+j}$ on $K$ that are analytic in its interior, and such that $f_1,\ldots f_i$ are zero-free on $K$
 we have with $f(s)=(f_1(s),\ldots,f_{i+j}(s))$   that  
\begin{multline*} \liminf_{T \to \infty} \frac 1 {\omega(T)} \operatorname{meas}  \left \{T \leq t \leq T+\omega(T):\max_{s \in K}  \abs{Z(s+it)-f(s)}<\varepsilon, \right. \\ \,
 \left. \max_{1\leq j \leq k}|\lambda_j^{-it}-a_j|<\varepsilon \right \}>0.
 \end{multline*}  
\end{defn} 

\begin{defn} \label{def13}
  We say that $Z(s)=(Z_1(s),\ldots,Z_{i+j}(s))$ satisfies the $(i,j)$-mixed    $\alpha$-discrete hybrid universality theorem in the strip  $\strip$  and  short intervals $[N,N+\omega(N)]$  if for any $\varepsilon>0$, any  $1<\lambda_1<\cdots< \lambda_k$  such that $ \log \lambda_1,\ldots,\log \lambda_k$ are linearly independent over $\Q$ and $\pi^{-1} \alpha \log \lambda_j \not \in \Q$  and complex numbers $a_1,\ldots,a_k$ such that $|a_j|=1$, any compact set $K \subset \strip$ with connected complement, 
   and any  continuous  functions $f_1,\ldots,f_{i+j}$ on $K$ that are analytic in its interior, and such that $f_1,\ldots f_i$ are zero-free on $K$
  we have with $f(s)=(f_1(s),\ldots,f_{i+j}(s))$   that  
  \begin{multline*} \liminf_{N \to \infty} \frac 1 {\omega(N)} \#  \left \{N \leq n \leq N+\omega(N):\max_{s \in K}  \abs{Z(s+in\alpha)-f(s)}<\varepsilon, \right. \\  \, \left. \max_{1\leq j \leq k}|\lambda_j^{-in\alpha}-a_j|<\varepsilon \right \}>0.
 \end{multline*}  
\end{defn} 

Our most general result is the following Theorem which with $\omega(T)=T$ implies Theorem \ref{thm3}, which implies Theorem \ref{thm1} and Theorem \ref{thm2}.

\begin{thm} \label{thm4} Let $\omega:\R^+ \to \R^+$ be a function such that $\lim_{t \to \infty} \omega(t)=\infty$ and \begin{gather} \label{iiij} \inf_{t \geq 1} \frac{\omega(\alpha t)}{\omega(t)}>0, \end{gather}
 when $\alpha>0$. Then the following propositions are equivalent
  \begin{enumerate} 
    \item $Z(s)$ satisfies the $(i,j)$-mixed   continuous universality theorem in the strip  $\strip$ and short intervals $[T,T+\omega(T)]$.
    \item $Z(s)$ satisfies   the $(i,j)$-mixed   $\alpha$-discrete  universality theorem in the strip  $\strip$ and short intervals $[N,N+\omega(N)]$ for some $\alpha>0$.
    \item $Z(s)$ satisfies  the $(i,j)$-mixed   $\alpha$-discrete  universality theorem in the strip  $\strip$ and short intervals $[N,N+\omega(N)]$ for all $\alpha>0$.
 \item $Z(s)$ satisfies the $(i,j)$-mixed  continuous hybrid universality theorem in the strip  $\strip$ and short intervals $[T,T+\omega(T)]$.
  \item $Z(s)$ satisfies  the  $(i,j)$-mixed  $\alpha$-discrete hybrid universality theorem in the strip  $\strip$ and short intervals $[N,N+\omega(N)]$ for some $\alpha>0$.
    \item $Z(s)$ satisfies  the $(i,j)$-mixed  $\alpha$-discrete hybrid universality theorem in the strip  $\strip$ and short intervals $[N,N+\omega(N)]$ for all $\alpha>0$.
    \end{enumerate}
\end{thm}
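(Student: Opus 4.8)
The plan is to follow the structure of the proof of Theorem \ref{thm1} essentially verbatim, tracking the short-interval window throughout and invoking the growth hypothesis \eqref{iiij} precisely at the points where the proof of Theorem \ref{thm1} used the homogeneity of the full interval $[0,T]$. As in Theorem \ref{thm3}, absolute values are read as the sup-norm on $\C^n$ with $n=i+j$, and the function $g$ is taken componentwise as in \eqref{gdef2}. The immediate implications $6)\to 5)$, $5)\to 2)$, $6)\to 3)$, $3)\to 2)$ and $4)\to 1)$ follow from the definitions exactly as before (for $3)\to 2)$ and $6)\to 3)$ one uses $\lim_{t\to\infty}\omega(t)=\infty$ so that the windows are nonempty for large parameters), so the work is again to establish $2)\to 1)$, $6)\to 4)$ and $1)\to 6)$.

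First I would treat $2)\to 1)$ and $6)\to 4)$. Here the approximation machinery \eqref{eq1}, \eqref{eq2}, \eqref{eq44} and the Mergelyan/triangle-inequality estimates carry over unchanged, since they concern only pointwise approximation on the enlarged set $K_0$ and do not see the interval. The one change is in passing from the discrete count to the continuous measure: in place of $T=\alpha N$ one works on the short window, replacing $S_N$ by the count over $N\le n\le N+\omega(N)$ and $V_T$ by the measure over $[T,T+\omega(T)]$ with $T=\alpha N$. A $\delta$-neighbourhood (resp.\ $\delta_1$-neighbourhood) of the admissible $n$ contributes measure at least $\min(\alpha,2\delta)$ each, so the continuous measure over the window is bounded below by a constant times the discrete count, and the normalising factors are $\tfrac1{\omega(\alpha N)}$ versus $\tfrac1{\omega(N)}$. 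It is exactly here that \eqref{iiij} is needed: dividing by $\omega(\alpha N)$ rather than $\omega(N)$ only shrinks the liminf by the positive factor $\inf_{t\ge 1}\omega(\alpha t)/\omega(t)$, so positivity is preserved.

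The main obstacle, as in Theorem \ref{thm1}, is $1)\to 6)$, and here the short-interval bookkeeping is genuinely more delicate. The construction of $K_0$, the shift constants $M$, $M_1$, $L_1$, $L$ via Kronecker's theorem \eqref{iii}, the union $K_1$ in \eqref{K1def}, and the stacked target function $h$ in \eqref{oj} all carry over verbatim, and applying the continuous short-interval universality theorem to $h$ on $K_1$ yields a set $V_T$ of positive lower density on windows $[T,T+\omega(T)]$. The subtlety is that a shift $t\in V_T$ lying near the bottom of a window produces, via \eqref{rera2} and \eqref{ij1}, a discrete point $n=n_{l_0}$ satisfying $t-\delta\le n\alpha\le t+C$ with $C=\delta+(M^2+LL_1)\alpha$; this $n$ can be pushed out of the window $[T,T+\omega(T)]$ by the additive shift $C$, which is a fixed constant independent of $T$. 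I would handle this by the same slicing argument as in \eqref{yrr}--\eqref{urr2}: partition the long window into unit slices $V_{\alpha(j+1)}\setminus V_{\alpha j}$, associate to each $t$ in such a slice an $n\in W_{j+C+1}\setminus W_{j-\delta}$, note each $n$ is counted for at most $C+1+\delta$ values of $j$, and sum over the slices covering $[\alpha N,\alpha(N+\omega(N))]$. Since $C$ is a bounded constant and $\omega(N)\to\infty$, the finitely many boundary slices lost at either end of the window are a vanishing fraction of the $\omega(N)$ slices, so the discrete count over a window of comparable length inherits the positive lower density; the final normalisation again absorbs the ratio $\omega(\alpha N)/\omega(N)$ using \eqref{iiij}. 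The crux of the argument is thus verifying that the fixed overhang $C$ and the $\min(\alpha,2\delta)$-type measure losses are asymptotically negligible against the diverging window length $\omega(N)$, which is exactly what the hypotheses $\omega(t)\to\infty$ and \eqref{iiij} guarantee.
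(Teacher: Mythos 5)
Your proposal is correct and follows essentially the same route as the paper's proof: reduce to the argument of Theorem \ref{thm1} in its $(i,j)$-mixed form with $g$ as in \eqref{gdef2}, redefine $S_N$, $U_N$, $V_T$ over the short windows, use \eqref{iiij} to absorb the normalisation ratios between $\omega(N)$ and $\omega(\alpha N)$, and use $\omega(t)\to\infty$ to make the fixed overhang $C+1+\delta$ and boundary slices negligible in the slicing argument \eqref{yrr}--\eqref{urr2}. One cosmetic correction: in the directions $2)\to 1)$ and $6)\to 4)$ the factor you need is $\inf_{t}\omega(t)/\omega(\alpha t)$, which is \eqref{iiij} applied with $\alpha^{-1}$ in place of $\alpha$ (exactly as the paper notes), not $\inf_{t}\omega(\alpha t)/\omega(t)$; since \eqref{iiij} is assumed for every $\alpha>0$, this is harmless.
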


\begin{proof} Like in the proof of Theorem \ref{thm3}, the proof is very similar to the proof of Theorem \ref{thm1}, as long as we interpret the absolute value as a norm in $\C^n$ and we define $g(s)$ by \eqref{gdef2} instead as in \eqref{eq1}.
We also need to use the corresponding universality theorems in their $(i,j)$-mixed form in short intervals. Thus the quantities $S_N, U_N, V_T$ defined by  \eqref{SN2ab},\eqref{SN2},\eqref{eq5w}  must instead be defined by  
\begin{align*}
S_N &:=      \left \{N \leq n \leq N+\omega(N):\max_{s \in K_0} \abs{Z(s+in\alpha)-g(s)}<\frac \varepsilon 2 \right \}, \\  U_N &:=      \begin{multlined}[t] \left \{N \leq n \leq N+\omega(N):\max_{s \in K_0} \abs{Z(s+in\alpha)-g(s)}<\frac \varepsilon 2, \right. \\ \left. \max_{j=1,\ldots,k} \abs{\lambda_j^{-in \alpha}-a_j}<\frac \varepsilon 2 \right \}, \end{multlined}
\\ 
V_T&:= \left \{T \leq t \leq T+\omega(T):\max_{s \in K_1}  \abs{Z(s+it)-h(s)}<\frac \varepsilon 2 \right \}. 
\\ \intertext{Then \eqref{SNineqa},  \eqref{SNineq}, and \eqref{ir} must be replaced by} 
\xi_1&:=  \liminf_{N \to \infty} \frac 1 {\omega(N)} \# S_N>0,  \qquad \xi_3:=  \liminf_{N \to \infty} \frac 1 {\omega(N)} \# U_N>0, \\ \intertext{and}    \xi_5&:= \liminf_{T  \to \infty} \frac 1 {\omega(T)} \operatorname{meas} V_T >0.
\end{align*}
For the conclusions \eqref{conc1} and \eqref{conc2} we must use
the condition $\inf_{N \geq 1}\omega(N)/ \omega(\alpha N)>0$ which follows by replacing $\alpha$ with $\alpha^{-1}$ in \eqref{iiij} .
 Finally at the end of page \pageref{urr2} we must sum over $j=N  \ldots,\lfloor N+\omega(N) \rfloor$ and instead of \eqref{urr2} we obtain
\begin{gather*}
(C+1+\delta) \# (W_{N+\omega(N)+C+1}\setminus W_{N-\delta}) \geq  \alpha^{-1}  \operatorname{meas} V_{\alpha N}.
\end{gather*}
 which  together with the condition  \eqref{iiij}  implies our result.
\end{proof}

 \begin{ack} The author is grateful to Athanasios Sourmelidis for giving useful comments on previous versions of this manuscript and for giving an interesting talk at the international conference on probability theory and number theory 2023, in Palanga, which gave me inspiration to write this paper.
 \end{ack}

\bibliographystyle{plain}

\end{document}